\newtheorem{theorem}{Theorem}
\newtheorem{lemma}{Lemma}
\newtheorem{proposition}{Proposition}
\newtheorem{remark}{Remark}
\title{A Cone-preserving Solution to a Nonsymmetric Riccati Equation}
\author{Emil Vladu\thanks{The authors are with the Department of Automatic Control at Lund University. This work was partially supported by the Wallenberg AI, Autonomous Systems and Software Program (WASP) funded by the Knut and Alice Wallenberg Foundation, as well as the European Research
Council (Advanced Grant 834142).}, Anders Rantzer}
\date{}
\begin{document}
\maketitle

\begin{abstract}
    In this paper, we provide the following simple equivalent condition for a nonsymmetric Algebraic Riccati Equation to admit a stabilizing cone-preserving solution: an associated coefficient matrix must be stable. The result holds under the assumption that said matrix be cross-positive on a proper cone, and it both extends and completes a corresponding sufficient condition for nonnegative matrices in the literature. Further, key to showing the above is the following result which we also provide: in order for a monotonically increasing sequence of cone-preserving matrices to converge, it is sufficient to be bounded above in a single vectorial direction.
\end{abstract}

\section{Introduction}
\noindent Algebraic Riccati equations have been studied extensively in the literature over the years, e.g., \cite{lancaster1995algebraic} and the references therein. They often appear on the form $$XBX + DX + XA + C = 0,$$ where $A, B, C, D \in \mathbb{R}^{n \times n}$, and are not only of theoretical but also of practical interest. Often, as in control theory \cite{dullerud2013course}\cite{willems1971least}, $D$ is taken as $A^T$ with $B, C$ symmetric, and a symmetric solution $X$ is sought for. A typical result under the assumption of stabilizability and detectability may provide a unique stabilizing solution in certain settings, e.g., \cite{kuvcera1973review}. However, more recently there has been an increased interest in its nonsymmetric counterpart, see e.g., \cite{freiling2002survey} and the references therein.

The specific case in which the negative of the matrix
$$L = \begin{pmatrix}
    A && B \\
    C && D
\end{pmatrix}$$ forms an M-matrix, i.e., an anti-stable matrix with nonpositive offdiagonal elements, with applications in transport theory and Markov models, has attracted some interest, see e.g., \cite{guo2001nonsymmetric}\cite{guo2007iterative} and the references therein. In particular, part of \cite[Theorem 1.1]{guo2007iterative} provides the following sufficient condition for the existence of a stabilizing, entrywise nonnegative solution: $-L$ should be an M-matrix. Analogue statements for irreducible singular matrices and more recently also regular matrices \cite{guo2016algebraic} are known to hold.

The objective of the present paper is to both generalize the above result to proper cones as well as to complete it into an equivalence (Theorem \ref{thrm:cones}). Subsequent papers on the topic appear to be focused exclusively on the M-matrix case and its applications, with no mention about proper cones to the best of the authors' knowledge. By contrast, the main purpose of this paper is the desire to better understand the result by identifying the structure which generates it: the conic structure of the nonnegative orthant in $\mathbb{R}^n$. In addition, the main result turns out to be useful also in a different context, namely control theory \cite{vladu2024stability}. 

In order to prove the main result, a fixed-point iteration approach similar to the one in \cite{guo2001nonsymmetric} is used in one direction. However, rather than exploiting the Kronecker product, we shall pass through the analytical integral solution of the Sylvester equation. Further, and more importantly, convergence no longer becomes a simple matter of applying the monotone convergence theorem elementwise. As a result, we also provide a somewhat unexpected convergence result (Theorem \ref{thrm:mono}) for a sequence of monotonically increasing cone-preserving matrices with a vectorial upper bound. This is novel to the best of the authors' knowledge.

The outline of the paper is as follows: Section 2 reviews the crucial notion of cross-positivity on a proper cone as well as some elementary facts in cone theory. In Section 3, we present the results of the paper along with their proofs. Section 4 subsequently concludes the paper with some suggestions for future works.

\section{Preliminaries}
In this section, we clarify the notation used in the paper and provide the necessary background required for the results.

\subsection{Definitions and Notation}
In this subsection, we explain some definitions and notation used throughout the paper. $\mathbb{R}$, $\mathbb{R}^n$ and $\mathbb{R}^{n \times m}$ refer to the set of real numbers, $n$-dimensional vectors and $n \times m$ matrices with entries in $\mathbb{R}$, respectively. $\succeq_K$ refers to the partial order induced by a proper cone $K$, see Subsection 2.2. If $A \in \mathbb{R}^{n \times n}$, then $A$ is said to be stable if all its eigenvalues have negative real part. $\lVert A \rVert$ refers to the corresponding norm induced by an inner product, if such a function has been supplied, and otherwise to the spectral norm.

\subsection{Cone Theory}
In this subsection, we provide some necessary background on cone theory from the literature. For more results on this topic, see e.g., \cite{berman1994nonnegative}\cite{schneider2006matrices} and the references therein.

A set $K \subseteq \mathbb{R}^n$ is said to be a cone if $x \in K$ and $\alpha \geq 0$ imply $\alpha x \in K$. A convex, closed and pointed ($K \cap -K = \{0\}$) cone with non-empty interior is said to be proper. A proper cone $K$ induces a partial order $\succeq_K$ such that $x \succeq_K y$ if and only if $x - y \in K$. If $x - y$ lies in the interior of $K$, we say that $x \succ_K y$. The standard example of a proper cone is the nonnegative orthant in $\mathbb{R}^n$, and $\succeq_K$ then reduces to the usual inequality between real numbers applied entrywise. 

Given a proper cone $K \subseteq \mathbb{R}^n$, the associated dual cone is defined as $$K_* = \{y \in \mathbb{R}^n \mid y^T x \geq 0 \; \mathrm{for \; all} \; x \in K\}.$$ The interior of the dual cone consists of all those $y$ such that $y^T x > 0$ for all nonzero $x \in K$. If a cone is proper, the associated dual cone is also proper \cite[Chap. 2.6.1]{boyd2004convex}. Further, it is routine to verify from the definitions that $\big( K \times K \big)_* = K_* \times K_*$ and that $K \times K$ is a proper cone in $\mathbb{R}^{2n}$. 

A matrix $A \in \mathbb{R}^{n \times n}$ is said to be cross-positive on $K$ if $x \in K$ and $y \in K_*$ with $y^T x = 0$ imply $y^T A x \geq 0$. In particular, the set of cross-positive matrices on the nonnegative orthant is simply the set of matrices with nonnegative offdiagonal elements. For more on cross-positive matrices, see e.g., \cite{schneider1970cross}. 

A matrix $A \in \mathbb{R}^{n \times n}$ such that $AK \subseteq K$ for a proper cone $K \subseteq \mathbb{R}^n$ is said to be $K$-nonnegative or leave $K$ invariant. The set of such matrices is denoted $\pi (K)$ and is itself known to be a proper cone in $\mathbb{R}^{n \times n}$ \cite[Chap. 1.1]{berman1994nonnegative} in the vector sense after a standard identification with $\mathbb{R}^{n^2}$. Hence, if a proper cone $K \subseteq \mathbb{R}^n$ is specified, $X \succeq_K Y$ where $X, Y \in \mathbb{R}^{n \times n}$ means that $X - Y \in \pi (K)$, i.e., $X - Y$ is $K$-nonnegative. 

Next, we gather some elementary facts about $K$-nonnegativity and cross-positivity of which we shall make frequent use.
\begin{proposition} \label{prop:basic}
    Given $x, y \in \mathbb{R}^n$, $A, B \in \mathbb{R}^{n \times n}$ and a proper cone $K \subseteq \mathbb{R}^n$, the following holds:
    \begin{enumerate}
        \item[(i)] If $A$ and $B$ are $K$-nonnegative, then so is $A + B$ and $AB$.
        \item[(ii)] If $A$ is $K$-nonnegative and $x \succeq_K y$, then $Ax \succeq_K Ay$.
        \item[(iii)] If $A \succeq_K B$ and $x \succeq_K 0$, then $Ax \succeq_K Bx$.
        \item[(iv)] If $A$ is $K$-nonnegative, then $A$ is cross-positive on $K$.
        \item[(v)] If $A$ and $B$ are cross-positive on $K$, then so is $A + B$.
    \end{enumerate}
\end{proposition}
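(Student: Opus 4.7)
The plan is to verify each of the five items by a short direct appeal to the definitions, since every claim unwinds into a one-line check once the relevant definitions are spelled out. No deep machinery is needed, and no step poses a real obstacle; the main discipline is to invoke exactly the right definition at each stage.

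For (i), I would take $x \in K$ and observe that $K$-nonnegativity of $A$ and $B$ gives $Ax, Bx \in K$, so $(A+B)x = Ax + Bx \in K$ because $K$ is a convex cone (closed under addition of its elements). For the product, $Bx \in K$ and then $A(Bx) \in K$, which establishes $AB \in \pi(K)$. Items (ii) and (iii) follow by translating the cone order into cone membership: $x \succeq_K y$ is $x - y \in K$, so $A(x-y) = Ax - Ay \in K$ gives (ii); similarly, $A \succeq_K B$ means $A - B \in \pi(K)$, and with $x \in K$ we get $(A-B)x = Ax - Bx \in K$, which is (iii).

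Item (iv) is an immediate consequence of the definition of the dual cone: if $x \in K$, $y \in K_*$ and $y^T x = 0$, then $Ax \in K$ by $K$-nonnegativity of $A$, and hence $y^T(Ax) \geq 0$ by the defining property of $y \in K_*$; the vanishing of $y^T x$ is not even used here. Finally, (v) is pure linearity: with $x \in K$, $y \in K_*$, $y^T x = 0$, cross-positivity of $A$ and $B$ yields $y^T Ax \geq 0$ and $y^T Bx \geq 0$, whence $y^T(A+B)x \geq 0$.

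The whole proof is therefore a sequence of one- or two-line arguments, each simply reading off the conclusion from the defining property of $K$-nonnegativity, the induced partial order, the dual cone, or cross-positivity. The only care required is to remember that $X \succeq_K Y$ for matrices means $X - Y \in \pi(K)$ in the sense introduced just before the proposition, which is what makes (iii) go through cleanly.
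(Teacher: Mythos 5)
Your proof is correct and follows essentially the same route as the paper's: each item is verified by unwinding the definitions of $K$-nonnegativity, the induced order, the dual cone, and cross-positivity, including the observation in (iv) that the orthogonality condition $y^T x = 0$ is not needed. No discrepancies to report.
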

\begin{proof}
    Regarding (i), if $x \in K$, then $(A + B)x = Ax + Bx \succeq_K 0$ and $ABx = Ay \succeq_K 0$ with $y = Bx \succeq_K 0$ since $K$ is a proper cone. As for (ii), note that $Ax - Ay = A(x - y) \succeq_K 0$ so that $Ax \succeq_K Ay$, since $x - y \in K$ by assumption. Similarly, in (iii) we note that $A - B$ is $K$-nonnegative by assumption, and so $(A - B)x \succeq_K 0$, i.e., $Ax \succeq_K Bx$. Further, (iv) follows by definition of cross-positivity, as $z = Ax \succeq_K 0$ if $x \succeq_K 0$ by assumption, so that $y^T Ax = y^T z \geq 0$ if $y \in K_*$ by definition of the dual cone. Finally, if $x \in K$, $y \in K_*$ with $y^T x = 0$, we have $y^T(A + B)x = y^T A x + y^T Bx \geq 0$ by assumption and so $A + B$ is cross-positive, i.e., (v) holds.
\end{proof}
We now recall the following monotone convergence result.
\begin{lemma} \label{lem:conv} \cite[Lemma 1]{berman1974cones} Let $K \subseteq \mathbb{R}^n$ be a proper cone and let $\{s_i \}_{i = 1}^\infty$ be such that $s_i \preceq_K s_{i + 1}$. Let $t \in \mathbb{R}^n$ be such that $s_i \preceq_K t$ for every positive integer $i$. Then the sequence $\{s_i \}_{i = 1}^\infty$ converges.
\end{lemma}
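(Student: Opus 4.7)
The plan is to reduce convergence in $\mathbb{R}^n$ to convergence of finitely many scalar sequences by testing the $s_i$ against dual vectors. The point is that the partial order $\succeq_K$ is completely characterized by the action of functionals in $K_*$: we have $x \succeq_K 0$ if and only if $y^T x \geq 0$ for all $y \in K_*$. This lets one transport the hypotheses (monotonicity and boundedness) to the real line, where the classical monotone convergence theorem applies.

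More concretely, I would first fix an arbitrary $y \in K_*$ and consider the scalar sequence $\{y^T s_i\}_{i=1}^\infty$. Since $s_{i+1} - s_i \in K$ and $y \in K_*$, we get $y^T s_{i+1} - y^T s_i \geq 0$, so this scalar sequence is nondecreasing. Similarly, $t - s_i \in K$ yields $y^T s_i \leq y^T t$, so it is bounded above. By the monotone convergence theorem on $\mathbb{R}$, $y^T s_i$ converges to some limit $\ell(y) \in \mathbb{R}$.

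Next, I would exploit that $K_*$ is itself a proper cone, hence has non-empty interior, and therefore contains $n$ linearly independent vectors $y_1, \dots, y_n$. Assembling these as the rows of a matrix $M \in \mathbb{R}^{n \times n}$ gives an invertible linear map such that $M s_i$ converges coordinatewise in $\mathbb{R}^n$, since each coordinate $y_j^T s_i$ converges by the previous step. Applying $M^{-1}$, which is continuous, yields convergence of $s_i = M^{-1}(M s_i)$ in $\mathbb{R}^n$, which is exactly what was to be shown.

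The only step requiring any care is the selection of a basis inside $K_*$; this is immediate once one notes that the dual of a proper cone is proper and therefore has non-empty interior, so it contains an open set and in particular a basis of $\mathbb{R}^n$. Beyond that, the argument is a straightforward dualization of the classical real monotone convergence theorem, and I do not anticipate any substantial obstacle.
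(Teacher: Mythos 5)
Your argument is correct and complete. Note, however, that the paper does not prove this lemma at all: it is imported verbatim as \cite[Lemma 1]{berman1974cones}, so there is no in-paper proof to compare against. Your route --- test the sequence against dual functionals, observe that for each $y \in K_*$ the scalar sequence $y^T s_i$ is nondecreasing (from $s_{i+1}-s_i \in K$) and bounded above by $y^T t$ (from $t - s_i \in K$), hence convergent, and then upgrade to convergence in $\mathbb{R}^n$ by choosing $n$ linearly independent functionals from the non-empty interior of the proper cone $K_*$ and inverting the resulting matrix --- is sound, and every step is justified. Two small remarks: you invoke the full bipolar characterization $x \succeq_K 0 \Leftrightarrow y^T x \geq 0$ for all $y \in K_*$, but only the trivial forward direction (the definition of $K_*$) is actually needed; and the existence of a basis inside $\operatorname{int} K_*$ follows simply because a non-empty open subset of $\mathbb{R}^n$ cannot be contained in a proper subspace. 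An alternative classical proof runs through compactness of the order interval $\{x : s_1 \preceq_K x \preceq_K t\}$, but your dualization is arguably cleaner and fits well with the machinery the paper already uses elsewhere (e.g., in Lemma \ref{lem:kxk}).
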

We close this section with two important results on cross-positivity.
\begin{lemma} \label{lem:cross_eAt} \cite[Theorem 3]{schneider1970cross} Let $K \subseteq \mathbb{R}^n$ be a proper cone and $A \in \mathbb{R}^{n \times n}$. Then $A$ is cross-positive on $K$ if and only if $e^{At}$ is $K$-nonnegative for all $t \geq 0$.
\end{lemma}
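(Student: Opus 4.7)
The plan is to prove the two implications separately, with the second requiring substantially more work.

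For the easy direction, suppose $e^{At}$ is $K$-nonnegative for all $t \geq 0$, and fix $x \in K$ and $y \in K_*$ with $y^T x = 0$. Define the smooth scalar function $\phi(t) := y^T e^{At} x$ on $[0, \infty)$. By assumption $e^{At} x \in K$, and pairing with $y \in K_*$ gives $\phi(t) \geq 0$ throughout $[0, \infty)$, while $\phi(0) = y^T x = 0$. Hence $\phi$ attains its minimum at $t = 0$, so the right derivative satisfies $\phi'(0^+) \geq 0$. A direct differentiation yields $\phi'(0) = y^T A x$, and cross-positivity follows.

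For the harder direction, my strategy is to first establish a shift lemma: \emph{if $A$ is cross-positive on $K$, then there exists $\alpha \geq 0$ such that $A + \alpha I \in \pi(K)$}. Granted this, set $B := A + \alpha I \in \pi(K)$; by Proposition \ref{prop:basic}(i), each power $B^k$ lies in $\pi(K)$, so each partial sum of $e^{Bt} = \sum_{k \geq 0} t^k B^k / k!$ lies in $\pi(K)$ for $t \geq 0$, and closedness of $\pi(K)$ yields $e^{Bt} \in \pi(K)$. Since $\alpha I$ commutes with $A$, we have $e^{At} = e^{-\alpha t} e^{Bt}$, which remains $K$-nonnegative because the scalar $e^{-\alpha t}$ is positive.

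The main obstacle is thus the shift lemma itself. I would attempt it by contradiction: if no such $\alpha$ exists, then for each positive integer $n$ one extracts, via a separating hyperplane argument, unit vectors $x_n \in K$ and $y_n \in K_*$ with $y_n^T A x_n + n \, y_n^T x_n < 0$, and the boundedness of $|y_n^T A x_n|$ forces $y_n^T x_n = O(1/n)$. Passing to convergent subsequences yields unit limits $x^* \in K$ and $y^* \in K_*$ with $(y^*)^T x^* = 0$, and combining cross-positivity with the limit of the separation inequality squeezes $(y^*)^T A x^*$ to zero. The delicate point is that this alone is not yet a contradiction, as the originally strict separation collapses into a $0/0$ situation in the limit. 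To close the argument one needs a finer choice of $(x_n, y_n)$, for instance as the maximizer of $-y^T(A + nI)x$ over the compact product of the two unit spheres, together with a careful analysis of the decay rate of $y_n^T x_n$; alternatively, one may invoke Nagumo's viability theorem, whose infinitesimal invariance condition for the flow of $\dot u = Au$ on $K$ is precisely cross-positivity. This closing step is where the real technical content of the result lies, and is the part I would expect to occupy the bulk of the work.
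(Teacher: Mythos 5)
The paper does not prove this lemma; it is quoted verbatim from Schneider--Vidyasagar, so there is no internal proof to compare against. Your easy direction is correct: $\phi(t)=y^{T}e^{At}x$ is nonnegative with $\phi(0)=0$, hence $\phi'(0)=y^{T}Ax\geq 0$. The problem is the hard direction, whose entire weight you place on the ``shift lemma'': that cross-positivity of $A$ on $K$ implies $A+\alpha I\in\pi(K)$ for some $\alpha\geq 0$. That lemma is \emph{false} for general proper cones; it is a special feature of the nonnegative orthant (where cross-positive means Metzler, and Metzler plus a large diagonal shift is entrywise nonnegative). A concrete counterexample: let $K\subseteq\mathbb{R}^{3}$ be the ice cream cone $\{x: x_{3}\geq\sqrt{x_{1}^{2}+x_{2}^{2}}\}$ and let $A$ be the generator of rotation about the $x_{3}$-axis, with $Ae_{1}=e_{2}$, $Ae_{2}=-e_{1}$, $Ae_{3}=0$. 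Then $e^{At}$ is a rotation preserving $K$ for every $t$, so $A$ is cross-positive on $K$ (by your own easy direction), yet for the boundary vector $x=(1,0,1)^{T}$ one has $(A+\alpha I)x=(\alpha,1,\alpha)^{T}\notin K$ for every $\alpha$, since $\alpha\geq\sqrt{\alpha^{2}+1}$ is impossible. So no shift places $A$ in $\pi(K)$, and your main line of attack cannot be repaired; this is precisely the gap between $\overline{\pi(K)+\mathbb{R}I}$ and the cross-positive matrices that Schneider and Vidyasagar study. Since the present paper works with arbitrary proper cones (that generalization being its whole point), the orthant-only argument does not suffice.

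Your proposed proof of the shift lemma is, accordingly, doomed: you correctly sense that the limiting argument ``collapses into a $0/0$ situation,'' but no refinement of the choice of $(x_{n},y_{n})$ can close it, because the statement being proved is false. The one salvageable remark is your parenthetical appeal to Nagumo's viability theorem: cross-positivity is exactly the subtangentiality condition $Ax\in T_{K}(x)$ for $x\in\partial K$ (the normal cone to $K$ at $x$ being $\{-y: y\in K_{*},\,y^{T}x=0\}$), and Nagumo then gives forward invariance of $K$ under $\dot u=Au$, i.e.\ $e^{At}K\subseteq K$ for $t\geq 0$. That, or the first-exit-time argument of Schneider--Vidyasagar's original Theorem~3, is the correct route for the hard direction; as written, your proposal neither develops it nor can fall back on the shift lemma, so the proof is incomplete in an essential way.
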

\begin{lemma} \label{lem:cross_stable} \cite[Facts 7.1, 7.3, 7.5]{schneider2006matrices} Suppose $A  \in \mathbb{R}^{n \times n}$ is cross-positive on a proper cone $K \subseteq \mathbb{R}^n$. Then the following are equivalent:
\begin{enumerate}
    \item[(i)] $A$ is stable.
    \item[(ii)] There exists $x \succ_K 0$ such that $Ax \prec_K 0$.
    \item[(iii)] $-A^{-1}$ is $K$-nonnegative.
\end{enumerate}
\end{lemma}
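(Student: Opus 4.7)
The plan is to prove the cycle (i) $\Rightarrow$ (iii) $\Rightarrow$ (ii) $\Rightarrow$ (i), leveraging Lemma \ref{lem:cross_eAt} throughout. For (i) $\Rightarrow$ (iii), since $A$ is stable the improper integral $\int_0^\infty e^{At}\,dt$ converges to $-A^{-1}$. By Lemma \ref{lem:cross_eAt}, each integrand lies in $\pi(K)$, which is a closed convex cone, so the integral, being a limit of nonnegative-coefficient linear combinations of elements of $\pi(K)$, lies in $\pi(K)$ as well.

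For (iii) $\Rightarrow$ (ii), I would pick any $z \succ_K 0$ and set $x := -A^{-1} z$, which immediately gives $Ax = -z \prec_K 0$ and $x \succeq_K 0$. To upgrade $x$ into the interior of $K$, suppose some nonzero $y \in K_*$ satisfies $y^T x = 0$. Rewriting as $(-A^{-T} y)^T z = 0$ with $z \succ_K 0$ and $-A^{-T} y \in K_*$ forces $-A^{-T} y = 0$, contradicting the invertibility of $A$ together with $y \neq 0$.

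The main obstacle is (ii) $\Rightarrow$ (i), for which I would invoke a Perron--Frobenius-type fact: the spectral abscissa $\sigma$ of a cross-positive matrix $A$ on $K$ is itself an eigenvalue of $A$ with a corresponding eigenvector in $K \setminus \{0\}$. This can be established by applying the classical Krein--Rutman theorem to the $K$-nonnegative matrices $e^{At}$ furnished by Lemma \ref{lem:cross_eAt}, whose spectral radii equal $e^{\sigma t}$; the delicate point is passing from a Perron eigenvector of $e^{At}$ to a genuine real eigenvector of $A$ for $\sigma$, but this is standard.

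Since cross-positivity of $A$ on $K$ is equivalent to cross-positivity of $A^T$ on $K_*$ (immediate from $(K_*)_* = K$ and the symmetric form of the defining inequality), the same result applied to $A^T$ produces a nonzero $w \in K_*$ with $A^T w = \sigma w$. Pairing $w$ against the $x$ supplied by (ii) then gives $\sigma w^T x = w^T A x$, where $w^T A x < 0$ because $-Ax \succ_K 0$ and $w \in K_* \setminus \{0\}$, and $w^T x > 0$ because $x \succ_K 0$ and $w \neq 0$. Hence $\sigma < 0$, i.e., $A$ is stable, closing the cycle.
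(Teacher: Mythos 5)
The paper offers no proof of this lemma at all --- it is imported wholesale from \cite{schneider2006matrices} --- so any argument you give is necessarily "a different route," and the real question is whether yours stands on its own. Your cycle (i)$\Rightarrow$(iii)$\Rightarrow$(ii)$\Rightarrow$(i) is correct. The first two implications are complete and elementary given Lemma \ref{lem:cross_eAt}: the representation $-A^{-1}=\int_0^\infty e^{At}\,\mathrm{dt}$ plus closedness of the convex cone $\pi(K)$, and the duality argument that $x=-A^{-1}z$ is interior (using that $-A^{-T}$ is $K_*$-nonnegative and that interior points of $K$ pair strictly positively with every nonzero element of $K_*$), are both airtight. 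The one load-bearing step you do not actually prove is the Perron--Frobenius property of cross-positive matrices in (ii)$\Rightarrow$(i): that the spectral abscissa $\sigma$ is an eigenvalue with an eigenvector in $K\setminus\{0\}$. That is a genuine theorem (it appears in the already-cited \cite{schneider1970cross}), and the passage from Perron eigenvectors of $e^{At}$ to an eigenvector of $A$ for $\sigma$ is not a one-liner --- one typically goes through the resolvent $(\mu I-A)^{-1}=\int_0^\infty e^{-\mu t}e^{At}\,\mathrm{dt}\succeq_K 0$ for $\mu>\sigma$ and a normalization and compactness argument as $\mu\downarrow\sigma$ --- so as written this step is a citation rather than a proof, albeit a correctly identified and correctly applied one. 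If you want (ii)$\Rightarrow$(i) to be self-contained, there is a more elementary route in the spirit of the paper's own Theorem \ref{thrm:mono}: since $-Ax\succ_K 0$, pick $\delta>0$ with $(A+\delta I)x\preceq_K 0$; then $\frac{d}{dt}\bigl(e^{\delta t}e^{At}x\bigr)=e^{\delta t}e^{At}(A+\delta I)x\preceq_K 0$ by Lemma \ref{lem:cross_eAt}, whence $0\preceq_K e^{At}x\preceq_K e^{-\delta t}x\to 0$; for arbitrary $w$ choose $\varepsilon>0$ with $x\pm\varepsilon w\in K$, observe that $e^{At}(x+\varepsilon w)$ and $e^{At}(x-\varepsilon w)$ both lie in $K$ and sum to $2e^{At}x\to 0$, so each tends to $0$ (pair against a fixed interior point of $K_*$), and therefore $e^{At}w\to 0$ for every $w$, which is stability of $A$.
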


\section{Results}
In this section, we provide the results of the paper. The main result is the following.
\begin{theorem} \label{thrm:cones}
    Let the proper cone $K \subseteq \mathbb{R}^n$ and the matrices $A, B, C, D \in \mathbb{R}^{n \times n}$ be given. Suppose now that $$L = \begin{pmatrix} A && B \\ C && D \end{pmatrix}$$ is cross-positive on $K \times K$. Then $L$ is stable if and only if
    \begin{equation} \label{eq:ric}
    XBX + DX + XA + C = 0
    \end{equation} has a solution $X_* \succeq_K 0$ such that $A + BX_*$ and $D + X_* B$ are stable and cross-positive on K.
\end{theorem}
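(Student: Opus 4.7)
The plan is to prove the two directions of the equivalence separately, with the reverse implication being substantially shorter than the forward. For the reverse direction, I would start by verifying the factorization
\begin{equation*}
L = \begin{pmatrix} I & 0 \\ -X_* & I \end{pmatrix} \begin{pmatrix} A + BX_* & B \\ 0 & D + X_*B \end{pmatrix} \begin{pmatrix} I & 0 \\ -X_* & I \end{pmatrix},
\end{equation*}
which is a direct computation using the Riccati identity to reduce the lower-left block to $C$. Denoting this as $L = PRP$, both factors are invertible since the diagonal blocks of $R$ are stable, so $-L^{-1} = P^{-1}(-R^{-1})P^{-1}$. Here $P^{-1} = \begin{pmatrix} I & 0 \\ X_* & I \end{pmatrix}$ is $K \times K$-nonnegative (as $X_* \succeq_K 0$), and $-R^{-1}$ is block upper-triangular with diagonal entries $-(A + BX_*)^{-1}$ and $-(D + X_*B)^{-1}$, each $K$-nonnegative by Lemma \ref{lem:cross_stable}(iii), and off-diagonal entry $(A + BX_*)^{-1} B (D + X_*B)^{-1}$, which reduces to a product of $K$-nonnegative matrices once signs are collected. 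Thus $-L^{-1}$ is $K \times K$-nonnegative, and Lemma \ref{lem:cross_stable}(iii) applied to $L$ yields stability.

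For the forward direction, I would first invoke Lemma \ref{lem:cross_stable} on $L$ to obtain $x_1, x_2 \succ_K 0$ such that $u := -(Ax_1 + Bx_2) \succ_K 0$ and $v := -(Cx_1 + Dx_2) \succ_K 0$. Restricting the cross-positivity of $L$ shows that $A$ and $D$ are cross-positive on $K$ while $B$ and $C$ are $K$-nonnegative, which forces $Ax_1 \prec_K 0$ and $Dx_2 \prec_K 0$; hence $A$ and $D$ are individually stable by Lemma \ref{lem:cross_stable}. I would then iterate the Sylvester equation $X_{k+1}A + DX_{k+1} = -C - X_kBX_k$ from $X_0 = 0$, whose unique solution admits the integral representation $X_{k+1} = \int_0^\infty e^{Dt}(C + X_kBX_k)e^{At}\,dt$. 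Routine inductions using Lemma \ref{lem:cross_eAt} yield $X_k \succeq_K 0$ and monotonicity $X_{k+1} \succeq_K X_k$.

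The critical step is then the vectorial upper bound $X_k x_1 \preceq_K x_2$, needed to invoke Theorem \ref{thrm:mono}. I would prove this by induction: multiplying the Sylvester equation by $x_1$ on the right and using $Ax_1 = -Bx_2 - u$ and $Cx_1 + Dx_2 = -v$, one rearranges the identity as
\begin{equation*}
D(X_{k+1}x_1 - x_2) = X_{k+1}B(x_2 - X_kx_1) + (X_{k+1} - X_k)BX_kx_1 + X_{k+1}u + v,
\end{equation*}
whose right-hand side is $\succ_K 0$ by the inductive hypothesis together with $v \succ_K 0$. Since $D$ is stable and cross-positive, $-D^{-1}$ is $K$-nonnegative by Lemma \ref{lem:cross_stable}(iii), and applying it forces $X_{k+1}x_1 - x_2 \preceq_K 0$. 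Theorem \ref{thrm:mono} then produces a $K$-nonnegative limit $X_*$ solving the Riccati with $X_*x_1 \preceq_K x_2$.

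For the stabilizing properties, cross-positivity of $A + BX_*$ and $D + X_*B$ is immediate from Proposition \ref{prop:basic}(iv)--(v). Stability of $A + BX_*$ follows from $(A + BX_*)x_1 \preceq_K Ax_1 + Bx_2 \prec_K 0$ combined with Lemma \ref{lem:cross_stable}. Stability of $D + X_*B$ follows from the direct expansion $(D + X_*B)(x_2 - X_*x_1) = -(v + X_*u) \prec_K 0$, together with a quick cross-positivity argument to show $x_2 - X_*x_1 \succ_K 0$: otherwise a supporting dual vector $y \in K_* \setminus \{0\}$ with $y^T(x_2 - X_*x_1) = 0$ would force $y^T(D + X_*B)(x_2 - X_*x_1) \geq 0$, contradicting the strict negativity. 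The main obstacle I anticipate is the vectorial-bound induction, which requires organizing the Sylvester rearrangement correctly and careful tracking of strict versus nonstrict cone inequalities throughout.
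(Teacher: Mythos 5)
Your proposal is correct, and while the overall architecture matches the paper's (fixed-point iteration on the Sylvester recursion, a vectorial upper bound feeding into Theorem \ref{thrm:mono}, and necessity via showing $-L^{-1} \succeq_{K \times K} 0$ and invoking Lemma \ref{lem:cross_stable}), you deviate in two substantive ways. First, your induction yields the tighter bound $X_k x_1 \preceq_K x_2$, whereas the paper settles for $X_i v_1 \preceq_K v_2 - D^{-1}u_2$; your rearrangement $D(X_{k+1}x_1 - x_2) = X_{k+1}B(x_2 - X_k x_1) + (X_{k+1}-X_k)BX_k x_1 + X_{k+1}u + v$ checks out, and the tighter bound is exactly what powers your second, more significant departure. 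For the stability of $D + X_*B$, the paper runs a duality argument: it forms the block-swapped transpose of $L$, notes it is cross-positive on $K_* \times K_*$, re-applies the already-proven half of the theorem to get a solution $Z_*$, and identifies $Z_i = X_i^T$ via uniqueness of the recursion. You instead compute directly that $(D + X_*B)(x_2 - X_*x_1) = -(v + X_*u) \prec_K 0$ and then show $x_2 - X_*x_1 \succ_K 0$ by a supporting-functional argument (a boundary point of $K$ admits a nonzero $y \in K_*$ annihilating it, and cross-positivity of $D + X_*B$ would then contradict the strict negativity). This is shorter and self-contained: it avoids re-running the convergence machinery on the dual cone and the transpose-identification step, at the cost of the small standard fact about supporting functionals (which the paper's preliminaries essentially already contain via the characterization of $\operatorname{int} K_*$ and $K_{**} = K$). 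Your necessity factorization $L = PRP$ with $P = \bigl(\begin{smallmatrix} I & 0 \\ -X_* & I\end{smallmatrix}\bigr)$ is also valid (the lower-left block does reduce to $C$ via the Riccati identity) and is only cosmetically different from the paper's similarity transform of $\operatorname{diag}(I,-I)L$.
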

\begin{proof}
    See below.
\end{proof}
\begin{remark}
    In the special case that $K$ is taken as the nonnegative orthant, the direction corresponding to sufficiency in Theorem \ref{thrm:cones} is equivalent to the part of the statement in \cite[Theorem 3.1]{guo2001nonsymmetric} corresponding to (nonsingular) M-matrices, see Section 1. This follows because cross-positive matrices on the nonnegative orthant have nonnegative offdiagonal elements and vice versa.
\end{remark}

The next result is not only instrumental in proving Theorem \ref{thrm:cones} but is also interesting in its own right.
\begin{theorem} \label{thrm:mono}
    Let the proper cone $K \subseteq \mathbb{R}^n$ and the sequence $\{X_i\}_{i=1}^\infty$ in $\mathbb{R}^{n \times n}$ be given. Suppose now that $0 \preceq_K X_{i} \preceq_K X_{i+1}$ and that there exist $s, r \in \mathbb{R}^n$ with $r \succ_K 0$ such that $X_i r \preceq_K s$ for all positive integers $i$. Then $\{X_i \}_{i=1}^\infty$ converges.
\end{theorem}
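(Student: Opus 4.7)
The plan is to leverage the interior-point property of $r$ together with Lemma \ref{lem:conv} to reduce matrix convergence of $\{X_i\}$ to the vector convergence of $\{X_i r\}$. Since $X_{i+1} \succeq_K X_i$ and $r \succeq_K 0$ give $X_{i+1} r \succeq_K X_i r$ by Proposition \ref{prop:basic}(iii), and $X_i r \preceq_K s$ by assumption, Lemma \ref{lem:conv} applied to $\{X_i r\}$ yields its convergence in $\mathbb{R}^n$, and in particular makes it Cauchy in Euclidean norm.

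Next, because $r$ lies in the interior of $K$, for each standard basis vector $e_k$ there exists a constant $c_k > 0$ with $c_k r \pm e_k \in K$, that is, $-c_k r \preceq_K e_k \preceq_K c_k r$. For integers $i \geq j$, the difference $X_i - X_j$ is $K$-nonnegative by monotonicity, so applying Proposition \ref{prop:basic}(ii) twice to the above inequalities yields
$$0 \preceq_K (X_i - X_j) e_k + c_k (X_i - X_j) r \preceq_K 2 c_k (X_i - X_j) r.$$

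To convert this $\preceq_K$-sandwich into a Euclidean-norm estimate I would invoke normality of the proper cone $K$: the Minkowski gauge $\phi(x) = \inf\{t > 0 : -t r \preceq_K x \preceq_K t r\}$ is a well-defined norm on $\mathbb{R}^n$ (finite because $r$ is interior, definite because $K$ is pointed), which is visibly monotone on order intervals, and is therefore equivalent to the Euclidean norm by finite-dimensionality of $\mathbb{R}^n$. This supplies a constant $M > 0$ with $\|u\| \leq M \|v\|$ whenever $0 \preceq_K u \preceq_K v$. Applied to the sandwich above, together with $\|(X_i - X_j) r\| \to 0$ and the triangle inequality, this yields $\|(X_i - X_j) e_k\| \to 0$ as $i, j \to \infty$ for each $k$, so every column of $\{X_i\}$ is Cauchy and $\{X_i\}$ converges.

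The main obstacle is the last step, the passage from $\preceq_K$-bounds to Euclidean-norm bounds on $X_i - X_j$. This is precisely normality of proper cones, a standard fact in cone theory that is not, however, recorded explicitly in the preliminaries of this paper; I would therefore include the short gauge-based justification above to keep the argument self-contained.
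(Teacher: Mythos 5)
Your proof is correct, and it diverges from the paper's at the decisive step. Both arguments rest on the same observation --- the interior point $r$ order-dominates every direction, so convergence along $r$ controls convergence along each basis vector --- but the mechanism for extracting column convergence differs. The paper fixes $w$, chooses $\varepsilon>0$ with $r\pm\varepsilon w\succeq_K 0$, and writes $X_i w=\tfrac{1}{\varepsilon}\bigl(X_i r-X_i(r-\varepsilon w)\bigr)$ as a difference of two sequences that are each monotone and order-bounded (by $s$ and $2s$ respectively), so Lemma \ref{lem:conv} is applied twice and no norm estimate is ever needed. You instead apply Lemma \ref{lem:conv} once, to $\{X_i r\}$, and then pass from the order sandwich $0\preceq_K(X_i-X_j)(e_k+c_k r)\preceq_K 2c_k(X_i-X_j)r$ to a Euclidean bound via normality of $K$. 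That step is the one genuinely new ingredient: normality holds for every proper cone in finite dimension, but it is not recorded in the paper's preliminaries, and you are right to supply the gauge justification --- note only that definiteness of $\phi$ uses closedness of $K$ as well as pointedness, to let $t\downarrow 0$ in $tr\pm x\in K$. The paper's route is more economical, staying entirely inside the order-theoretic toolkit it has already assembled (Lemma \ref{lem:conv} and Proposition \ref{prop:basic}); yours makes explicit the metric fact (order-sandwiched vectors are norm-controlled) that Lemma \ref{lem:conv} quietly encapsulates, at the cost of importing one standard lemma from cone theory.
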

\begin{proof}
    Let $w \in \mathbb{R}^n$ be given and note that because $r \succ_K 0$, there must exist some $\varepsilon > 0$ such that both $r - \varepsilon w \succeq_K 0$ and $r + \varepsilon w \succeq_K 0$. Define now $a_i = X_i r$ and $b_i = X_i (r - \varepsilon w )$. First, it is clear from the assumptions and the fact that $r - \varepsilon w \succeq_K 0$ that $(X_{i + 1} - X_i ) r \succeq_K 0$ and $(X_{i + 1} - X_i ) (r - \varepsilon w) \succeq_K 0$, i.e., $a_{i} \preceq_K a_{i + 1}$ and $b_{i} \preceq_K b_{i + 1}$, respectively. Second, since by assumption $X_i \succeq_K 0$ and $X_i r \preceq_K s$, we have $a_i \preceq_K s$ and $$0 \preceq_K X_i (r + \varepsilon w) = X_i (2r - r + \varepsilon w) = 2 X_i r - X_i (r - \varepsilon w) = 2 X_i r - b_i$$ so that $b_i \preceq_K 2 X_i r \preceq_K 2s$. It follows by Lemma \ref{lem:conv} that both $\{a_i \}_{i=1}^\infty$ and $\{b_i \}_{i=1}^\infty$ converge. But then the sequence with elements $$X_i w = \frac{1}{\varepsilon} X_i (r - r + \varepsilon w) = \frac{1}{\varepsilon} (a_i - b_i )$$ must also converge. Now, since this holds for all $w \in \mathbb{R}^n$, one can choose $w$ so as to pick out each column of $X_i$ to show that they all converge. But by the equivalence of norms, this implies that $\{X_i \}_{i=1}^\infty$ converges entrywise and the conclusion follows.
\end{proof}

In order to prove Theorem \ref{thrm:cones}, we shall require the following additional lemmata.
\begin{lemma} \label{lem:lyap}
    Let the proper cone $K \subseteq \mathbb{R}^n$ and the matrices $A, C, D \in \mathbb{R}^{n \times n}$ be given. Suppose now that $A$ and $D$ are stable and cross-positive on $K$ and that $C \succeq_K 0$. Then 
    \begin{equation} \label{eq:lyap}
    DX + XA + C = 0
    \end{equation}
    has a unique solution $X_* \succeq_K 0$.
\end{lemma}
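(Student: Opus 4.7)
The plan is to produce the solution $X_*$ explicitly via the standard integral formula associated with the Sylvester equation and then verify cone-preservation using the exponential characterization of cross-positivity (Lemma \ref{lem:cross_eAt}).

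First I would invoke standard Sylvester equation theory: since $A$ and $D$ are both stable, the spectrum of $D$ lies in the open left half-plane while the spectrum of $-A$ lies in the open right half-plane, so these spectra are disjoint. This is exactly the classical condition ensuring that \eqref{eq:lyap} has a unique solution in $\mathbb{R}^{n \times n}$. Consequently, uniqueness is automatic and I need only exhibit a nonnegative solution.

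Next I would propose the candidate
\begin{equation*}
    X_* = \int_0^\infty e^{Dt} C\, e^{At}\, dt,
\end{equation*}
and argue convergence of the improper integral from stability of $A$ and $D$, which forces $\|e^{At}\|$ and $\|e^{Dt}\|$ to decay exponentially. Differentiating $e^{Dt} C e^{At}$ and integrating from $0$ to $\infty$ then yields $-C = DX_* + X_*A$, confirming that $X_*$ solves \eqref{eq:lyap}. This is essentially the same calculation used to verify the usual Lyapunov integral, and it is where I would deliberately avoid routine computation.

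The key step is showing $X_* \succeq_K 0$. By Lemma \ref{lem:cross_eAt}, the hypotheses that $A$ and $D$ are cross-positive on $K$ give that $e^{At}, e^{Dt} \in \pi(K)$ for every $t \geq 0$. Since $C \in \pi(K)$ by assumption and $\pi(K)$ is closed under matrix products (Proposition \ref{prop:basic}(i)), the integrand $e^{Dt} C e^{At}$ lies in $\pi(K)$ for every $t \geq 0$. The main technical point, and the one I expect to require the most care, is transferring this pointwise membership to the integral. I would do this by writing $X_*$ as a limit of Riemann sums over $[0,N]$ followed by the limit $N \to \infty$; since $\pi(K)$ is a convex cone closed under nonnegative combinations and is closed as a subset of $\mathbb{R}^{n \times n}$ (being itself a proper cone under the standard identification with $\mathbb{R}^{n^2}$), each finite Riemann sum lies in $\pi(K)$, and so does the limit. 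Hence $X_* \succeq_K 0$, completing the proof.
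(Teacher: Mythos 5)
Your proposal is correct and follows essentially the same route as the paper: uniqueness from classical Sylvester theory (the spectra of $D$ and $-A$ are disjoint by stability), the explicit integral representation $X_* = \int_0^\infty e^{Dt} C e^{At}\,dt$, and $K$-nonnegativity of the integrand via Lemma \ref{lem:cross_eAt} and Proposition \ref{prop:basic}(i), passed to the integral using the fact that $\pi(K)$ is a closed convex cone. Your Riemann-sum justification of that last step is simply a more explicit spelling-out of the closure argument the paper states in one line.
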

\begin{proof}
    Existence and uniqueness of a solution $X_*$ follows from Theorem 4.4.6 in \cite{horn1991topics}, as $A$ and $D$ are stable by assumption. Further,
    \begin{equation} \label{eq:lyap_sol}
    X_* = \int_0^\infty e^{D t}Ce^{A t}\mathrm{dt},
    \end{equation}
    as 
    \begin{equation*}
    \begin{aligned}
    -C &= \big [ e^{Dt}Ce^{At} \big ] _0^\infty = \int_0^\infty \frac{d}{dt} \big( e^{Dt}Ce^{At} \big ) \mathrm{dt} \\ &= \int_0^\infty \big ( De^{Dt}Ce^{At} + e^{Dt}Ce^{At}A \big ) \mathrm{dt} = DX_* + X_* A,
    \end{aligned}
    \end{equation*}
    similar to the well-known Lyapunov equation solution. It follows now from Lemma \ref{lem:cross_eAt} and Proposition \ref{prop:basic} (i) that the integrand in (\ref{eq:lyap_sol}) is $K$-nonnegative for all $t \geq 0$. As a result, we have $X_* \succeq_K 0$, as $\pi (K)$ is a proper cone and is therefore closed and preserves nonnegative linear combinations, see Section 2.
\end{proof}

\begin{lemma} \label{lem:kxk}
    Let the proper cone $K \subseteq \mathbb{R}^n$ and the matrices $A, B, C, D \in \mathbb{R}^{n \times n}$ be given. Then $$L = \begin{pmatrix} A && B \\ C && D \end{pmatrix}$$ is cross-positive on $K \times K$ if and only if $A$ and $D$ are cross-positive on $K$ and $B, C \succeq_K 0$.
\end{lemma}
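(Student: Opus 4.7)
The plan is to directly unpack the definition of cross-positivity on $K \times K$ using the identity $(K \times K)_* = K_* \times K_*$ noted in Section 2, and then to exploit our freedom to choose test vectors blockwise. Throughout, for $(x_1, x_2) \in K \times K$ and $(y_1, y_2) \in K_* \times K_*$, the inner product $y_1^T x_1 + y_2^T x_2$ is a sum of two nonnegative reals, so it vanishes if and only if $y_1^T x_1 = y_2^T x_2 = 0$.

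For the forward direction, I would extract each of the four conditions by zeroing out one block at a time. Taking $(x, 0) \in K \times K$ and $(y, 0) \in K_* \times K_*$ with $y^T x = 0$ reduces the cross-positivity inequality for $L$ to $y^T A x \geq 0$, which gives cross-positivity of $A$ on $K$. The symmetric choice $(0, x)$ and $(0, y)$ yields cross-positivity of $D$. To recover $B \succeq_K 0$, I would pick $(0, x) \in K \times K$ with $x \in K$ arbitrary and $(y, 0) \in K_* \times K_*$ with $y \in K_*$ arbitrary; the orthogonality condition is satisfied automatically, and the cross-positivity inequality collapses to $y^T B x \geq 0$. Since $x, y$ range over $K, K_*$ arbitrarily, this forces $B x \in (K_*)_* = K$, i.e., $B \succeq_K 0$. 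The analogous choice $(x, 0)$ and $(0, y)$ gives $C \succeq_K 0$.

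For the reverse direction, assume $A, D$ are cross-positive on $K$ and $B, C \succeq_K 0$, and take arbitrary $(x_1, x_2) \in K \times K$ and $(y_1, y_2) \in K_* \times K_*$ with $y_1^T x_1 + y_2^T x_2 = 0$. The above observation yields $y_1^T x_1 = y_2^T x_2 = 0$, so cross-positivity of $A$ and $D$ individually gives $y_1^T A x_1 \geq 0$ and $y_2^T D x_2 \geq 0$. For the off-diagonal terms, $B x_2 \in K$ and $y_1 \in K_*$ give $y_1^T B x_2 \geq 0$, and similarly $y_2^T C x_1 \geq 0$. Summing the four nonnegative contributions produces $(y_1^T, y_2^T) L (x_1^T, x_2^T)^T \geq 0$, which is exactly cross-positivity of $L$ on $K \times K$.

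There is no genuine obstacle here; the only subtle point is the observation that the orthogonality constraint on $K \times K$ decouples into two separate orthogonality constraints on $K$, which in turn makes it legitimate to set one block of either test vector to zero without losing generality in the forward direction.
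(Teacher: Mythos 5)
Your proposal is correct and follows essentially the same route as the paper: both directions proceed by choosing block-structured test vectors, using $(K\times K)_* = K_*\times K_*$ and $K_{**}=K$ to extract cross-positivity of $A,D$ and $K$-nonnegativity of $B,C$, and decoupling the orthogonality constraint into its two blockwise parts for the converse. No gaps.
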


\begin{proof}
     In the first direction, suppose that $L$ is cross-positive on $K \times K$ and take any $x \in K$ and $y \in K_*$ such that $y^T x = 0$. It follows that $\bar{y}^T \bar{x} = 0$, where $\bar{x} = (x^T, 0^T)^T \in K \times K$ and $\bar{y} = (y^T, 0^T)^T \in \big( K \times K \big)_*$, and so by assumption $y^T Ax = \bar{y}^T L \bar{x} \geq 0$, implying that $A$ is cross-positive on $K$. Taking instead $\bar{x} = (0, x^T)^T \in K \times K$ and noting that $\bar{y}^T \bar{x} = 0$ -- this time for any $x \in K$ and $y \in K_*$ -- again by cross-positivity $y^T B x = \bar{y}^T L \bar{x} \geq 0$. Since this holds for any fixed $x \in K$ as $y$ ranges over all elements in $K_*$, we must have $Bx \in K_{**}$, i.e., $B \succeq_K 0$ as for proper cones $K_{**} = K$, see e.g., \cite[Chap. 2.6.2]{boyd2004convex}. Similar reasoning gives that $D$ is cross-positive on $K$ and that $C \succeq_K 0$.
    
    As for the other direction, suppose that $y = (y_1 ^T , y_2 ^T )^T \in \big(K \times K \big)_*$ and $z = (z_1 ^T , z_2 ^T )^T \in K \times K$ are given such that $y^T z = 0$, i.e., 
     \begin{equation} \label{eq:dual}
     y^T z = y_1^T z_1 + y_2^T z_2 = 0.
     \end{equation} Note now that since $\big( K \times K \big)_* = K_* \times K_*$, we must have $y_1 ^T z_1 \geq 0$ and $y_2 ^T z_2 \geq 0$. As such, the only possibility for (\ref{eq:dual}) to hold is that both $y_1 ^T z_1 = 0$ and $y_2 ^T z_2 = 0$. Hence, since $A$ and $D$ are cross-positive on $K$ by assumption, it follows that $y_1^T A z_1 \geq 0$ and $y_2^T D z_2 \geq 0$. Consequently, $$y^T L z = y_1^T A z_1 + y_1^T B z_2 + y_2^T C z_1 + y_2^T D z_2 \geq 0$$ since by assumption $B, C \succeq_K 0$. But this means exactly that $L$ is cross-positive on $K \times K$.
\end{proof}

\begin{lemma} \label{lem:uv}
    Let the proper cone $K \subseteq \mathbb{R}^n$ and the matrices $A, B, C, D \in \mathbb{R}^{n \times n}$ be given. Suppose now that $$L = \begin{pmatrix} A && B \\ C && D \end{pmatrix}$$ is stable and cross-positive on $K \times K$. Then $A$ and $D$ are stable and there exist $u_1, u_2, v_1, v_2 \in \mathbb{R}^n$ with $v_1, v_2 \succ_K 0$ such that 
    \begin{equation} \label{eq:Lv}
        Av_1 + Bv_2 = u_1 \prec_K 0, \; \; \; \; Cv_1 + Dv_2 = u_2 \prec_K 0.
    \end{equation}
\end{lemma}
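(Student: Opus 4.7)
The plan is to extract everything from a single application of Lemma \ref{lem:cross_stable} to $L$ itself. Since $L$ is stable and cross-positive on the proper cone $K \times K$, this lemma yields some vector $\bar{v} \in \mathbb{R}^{2n}$ with $\bar{v} \succ_{K \times K} 0$ and $L \bar{v} \prec_{K \times K} 0$. Writing $\bar{v} = (v_1^T, v_2^T)^T$ and $L \bar{v} = (u_1^T, u_2^T)^T$ and using the elementary fact that $\mathrm{int}(K \times K) = \mathrm{int}(K) \times \mathrm{int}(K)$, one immediately reads off that $v_1, v_2 \succ_K 0$ and $u_1, u_2 \prec_K 0$. Block multiplication then produces exactly the equations in (\ref{eq:Lv}).

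For the stability of $A$ and $D$, my strategy is to reduce each to a direct application of Lemma \ref{lem:cross_stable} on the smaller cone $K$. Lemma \ref{lem:kxk} already tells us that $A$ and $D$ are cross-positive on $K$ and that $B, C \succeq_K 0$, so the only thing missing is a witness vector strictly inside $K$ that is mapped strictly outside $K$ on the negative side. For $A$, I would rearrange the first equation of (\ref{eq:Lv}) as $A v_1 = u_1 - B v_2$ and argue that $-A v_1 = (-u_1) + B v_2$ lies in $\mathrm{int}(K)$: indeed $-u_1 \in \mathrm{int}(K)$, $B v_2 \in K$ (since $B \succeq_K 0$ and $v_2 \succeq_K 0$), and in any convex cone one has $\mathrm{int}(K) + K \subseteq \mathrm{int}(K)$ by openness of the interior and additive closure of $K$. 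Hence $A v_1 \prec_K 0$ with $v_1 \succ_K 0$, so Lemma \ref{lem:cross_stable} gives stability of $A$. The analogous manipulation of the second equation of (\ref{eq:Lv}) yields stability of $D$.

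There is no real obstacle here; the only subtle point is keeping track of strict versus non-strict cone inequalities, handled by the observation $\mathrm{int}(K) + K \subseteq \mathrm{int}(K)$. Everything else is a direct assembly of results already established above.
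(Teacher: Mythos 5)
Your proposal is correct and follows essentially the same route as the paper's proof: a single application of Lemma \ref{lem:cross_stable} to $L$, partitioning of the resulting vectors, Lemma \ref{lem:kxk} to obtain cross-positivity of $A, D$ and $K$-nonnegativity of $B, C$, and then a second application of Lemma \ref{lem:cross_stable} via $Av_1 = u_1 - Bv_2 \prec_K 0$ and its analogue for $D$. The only cosmetic difference is that the paper verifies $v_1, v_2 \succ_K 0$ by an explicit $\varepsilon$-ball argument where you cite $\mathrm{int}(K \times K) = \mathrm{int}(K) \times \mathrm{int}(K)$; both are valid.
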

\begin{proof}
    Invoke immediately Lemma \ref{lem:cross_stable} to show that there exists a $v \succ_{K \times K} 0$ such that $Lv = u \prec_{K \times K} 0$. With the even partitioning $u = (u_1 ^T , u_2 ^T )^T$, $v = (v_1 ^T , v_2 ^T )^T$, we thus obtain (\ref{eq:Lv}) as desired. In order to see that $v_1, v_2 \succ_K 0$, note that since $v$ lies in the interior of $K \times K$, there is an $\varepsilon > 0$ such that for all $y \in \mathbb{R}^{2n}$ with $\lVert y - v \rVert < \varepsilon$ we have $y \in K \times K$. Consider now any $y_1 \in \mathbb{R}^n$ such that $\lVert y_1 - v_1 \rVert < \varepsilon$. Since $\lVert y_1 - v_1 \rVert = \lVert y - v \rVert$ with $y = (y_1 ^T , v_2 ^T ) ^T$, it follows that $y \in K \times K$ and so $y_1 \in K$. Hence, $v_1$ lies in the interior of $K$, i.e., $v_1 \succ_K 0$. Similar reasoning gives $v_2 \succ_K 0$.
    
    Finally, Lemma $\ref{lem:kxk}$ implies that $A$ and $D$ are cross-positive on $K$ and that $B, C \succeq_K 0$. Consequently, $Bv_2 \succeq_K 0$ and $Cv_1 \succeq_K 0$, and as a result, the above inequalities (\ref{eq:Lv}) imply that $Av_1 \prec_K 0$ and $Dv_2 \prec_K 0$, as $p \succ_K 0$ and $q \succeq_K 0$ imply $p + q \succ_K 0$. Another application of Lemma \ref{lem:cross_stable} therefore implies that $A$ and $D$ are both stable.
\end{proof}

We are now ready for the proof of Theorem \ref{thrm:cones}.

\begin{proof}
    \textbf{Theorem \ref{thrm:cones}} 
    
    \noindent Suppose in the first direction that $L$ is stable. The proof outline is then as follows: we construct a monotonically increasing sequence of matrices which is shown to converge to the desired solution of (\ref{eq:ric}). For this purpose, consider the recursion
    \begin{equation} \label{eq:rec}
        D X_{i+1} + X_{i+1} A = -X_{i} B X_{i} - C
    \end{equation}
    and set $X_0 = 0$. We proceed by induction to show that the sequence $\{ X_i \}_{i = 0}^\infty$ generated by (\ref{eq:rec}) is both well-defined and monotonically increasing. In the case that $i = 1$, an application of Lemma \ref{lem:lyap} directly gives a unique solution $X_1 \succeq_K 0 = X_0$. Suppose now that $X_{i}$ is well-defined through (\ref{eq:rec}) and that $X_{i} \succeq_K X_{i - 1}$ up until $i = k$ for some positive integer $k$. By Lemma \ref{lem:kxk}, $A$ and $D$ are both cross-positive on $K$ and $B, C \succeq_K 0$. Thus, $X_k B X_k + C \succeq_K 0$ by the induction assumption and so by Lemma \ref{lem:lyap}, $X_{k+1}$ follows uniquely from (\ref{eq:rec}), as $A$ and $D$ are stable by Lemma \ref{lem:uv}. Further, subtracting $DX_k + X_k A$ from both sides in (\ref{eq:rec}) yields 
    \begin{equation*}
    \begin{aligned}
    D(X_{k + 1} - X_k) + (X_{k + 1} - X_k)A &= -X_k B X_k - DX_k - X_k A - C \\ & \preceq_K -X_{k - 1} B X_{k - 1} - D X_k - X_k A - C = 0.
    \end{aligned}
    \end{equation*}
    Here, the induction assumption (\ref{eq:rec}) was invoked in the second equality, and $X_{k} \succeq_K X_{k-1}$ was invoked to give $X_k B X_k \succeq_K X_{k - 1} B X_{k - 1}$, hence the inequality. The latter statement follows from the assumption $B \succeq_K 0$, repeated application of Proposition \ref{prop:basic} (i) and transitivity. But again by Lemma \ref{lem:lyap}, this means that $X_{k+1} - X_k \succeq_K 0$, i.e., $X_{k+1} \succeq_K X_k$. Thus, the sequence $\{ X_i \}_{i = 0}^\infty$ generated by (\ref{eq:rec}) is well-defined and monotonically increasing by induction.

    Next, we show that $\{X_i\}_{i = 0}^\infty$ is bounded above in the sense that there exist $r, s \in \mathbb{R}^n$ with $r \succ_K 0$ such that $X_i r \preceq_K s$. For this purpose, apply Lemma \ref{lem:uv} to obtain (\ref{eq:Lv}), choose $r = v_1$ and $s = v_2 - D^{-1} u_2$, where $-u_1, -u_2, v_1, v_2 \succ_K 0$, and proceed by induction. The $i = 0$ case follows immediately: $s = v_2 - D^{-1}(Cv_1 + Dv_2) = -D^{-1}Cv_1 \succeq_K 0 = X_0 r$ by Proposition \ref{prop:basic} (i), since $-D^{-1} \succeq_K 0$ by Lemma \ref{lem:cross_stable} as $D$ is cross-positive on $K$ by assumption and stable from Lemma \ref{lem:uv}. Suppose now $X_{i} v_1 \preceq_K s = v_2 - D^{-1} u_2$ where $i = k$ for some $k \geq 0$. We have
    \begin{equation*}
    \begin{aligned}
        -DX_{k+1}v_1 &= X_{k + 1}Av_1 + X_k BX_k v_1 + Cv_1 \preceq_K X_{k + 1} Av_1 + X_k Bv_2 - Dv_2 + u_2 \\ & = (X_{k + 1} - X_{k})Av_1  + X_k u_1 - Dv_2 + u_2 \preceq_K -Dv_2 + u_2. 
    \end{aligned}
    \end{equation*}
    Here, the first equality comes from (\ref{eq:rec}). In the first inequality, the second expression in (\ref{eq:Lv}) and $X_k v_1 \preceq_K v_2$ were invoked, noting that $-D^{-1} u_2 \preceq_K 0$. In the second equality, the first expression in (\ref{eq:Lv}) was used, and in the second inequality monotonicity was invoked along with the fact that $Av_1 = u_1 - Bv_2 \prec_K 0$ and $X_k u_1 \preceq_K 0$. Multiplication by $-D^{-1} \succeq_K 0$ thus gives $X_{k + 1} v_1 \preceq_K v_2 - D^{-1} u_2$, and so the desired conclusion follows by induction.

    Altogether, we have shown that $0 = X_0 \preceq_K X_i \preceq_K X_{i + 1}$ and that there exist $r, s \in \mathbb{R}^{n}$ with $r \succ_K 0$ such that $X_i r \preceq_K s$ for all $i \geq 0$. An application of Theorem \ref{thrm:mono} thus shows that $\{X_i \}_{i = 1}^\infty$ is convergent, i.e., $X_i \rightarrow X_*$ as $i \rightarrow \infty$ for some $X_* \in \mathbb{R}^{n \times n}$. Consequently, since the sequence satisfies (\ref{eq:rec}), we have $$X_* BX_* + DX_* + X_* A + C = 0.$$ Further, since $\pi (K)$ is a proper cone and hence closed, see Section 2, $\{X_i \}_{i = 1}^\infty$ converges inside $\pi (K)$, i.e., $X_* \succeq_K 0$.

    It remains to show that $A + BX_*$ and $D + X_* B$ are stable and cross-positive on $K$. For this purpose, note that $X_* v_1 \preceq_K v_2$ because $K$ is closed. Together with the assumption $B \succeq_K 0$ and Proposition \ref{prop:basic} (ii), we thus have $$(A + BX_*)v_1 \preceq_K Av_1 + Bv_2 = u_1 \prec 0,$$ where in the final equality the first expression in (\ref{eq:Lv}) was invoked. But $A + BX_*$ is cross-positive on $K$ according to Proposition \ref{prop:basic} (i), (iv) and (v), since $A$ is also cross-positive on $K$ by assumption. It thus follows from Lemma \ref{lem:cross_stable} that $A + BX_*$ is stable.
    
    As for $D + X_* B$, note that the matrix $$\begin{pmatrix} D^T && B^T \\ C^T && A^T \end{pmatrix} = \begin{pmatrix} 0 && I \\ I && 0 \end{pmatrix} \begin{pmatrix} A && B \\ C && D \end{pmatrix}^T \begin{pmatrix} 0 && I \\ I && 0 \end{pmatrix}$$ has the same spectrum as $L$ and is therefore stable, as it is a similarity transform of $L^T$. Further, it is cross-positive on $K_* \times K_*$ by Lemma \ref{lem:kxk}, as $D^T$ and $A^T$ are cross-positive on $K_*$ due to $K_{**} = K$, and $B^T, C^T \succeq_{K_*} 0$ by definition of $K_*$. By the part of Theorem \ref{thrm:cones} proved so far, the equation $$ZB^T Z + A^T Z + ZD^T + C^T = 0$$ must therefore have a solution $Z_* \succeq_{K_*} 0$ such that $D^T + B^T Z_*$ is stable and cross-positive on $K_*$. Additionally, such a solution can be taken as the limit of a sequence $\{Z_i\}_{i=0}^{\infty}$ with $Z_0 = 0$ which satisfies the corresponding recursion (\ref{eq:rec}), i.e., $$A^T Z_{i + 1} + Z_{i + 1}D^T = -Z_i B^T Z_i - C^T .$$ But according to first part of the proof, this recursion is also satisfied by $\{X_i^T\}_{i = 0}^{\infty}$ with $X_0 = 0$ once (\ref{eq:rec}) is transposed. Because the sequence generated by the recursion was shown to be unique, we may conclude that $Z_i = X_i ^T$ and so by the continuity of the transpose operator $Z_* = X_* ^T$. But then the stability and cross-positivity of $D^T + B^T Z_* = D^T + B^T X_* ^T$ on $K_*$ implies the stability and cross-positivity of $D + X_* B = (D^T + B^T X_* ^T) ^T$ on $K$. This concludes the sufficiency part of the proof.
    
    As for necessity, note first that by the following well-known similarity transformation we have 
    \begin{equation*} \begin{aligned} 
    &\begin{pmatrix}
        I && 0 \\ -X_* && I
    \end{pmatrix} 
    \begin{pmatrix}
        A && B \\ -C && -D
    \end{pmatrix}
    \begin{pmatrix}
        I && 0 \\ X_* && I
    \end{pmatrix} \\ &= 
    \begin{pmatrix}
        A + BX_* && B \\ -X_* BX_* - DX_* - X_* A - C && -(D + X_* B)
    \end{pmatrix}\end{aligned}\end{equation*}.
    
    Consequently, the assumptions yield
    \begin{equation*}
    \begin{aligned}
        &-L^{-1} = - \Bigg( \begin{pmatrix}
            I && 0 \\ 0 && -I
        \end{pmatrix}
        \begin{pmatrix}
            A && B \\ -C && -D
        \end{pmatrix} \Bigg)^{-1} \\ &=
        - \Bigg( \begin{pmatrix}
            I && 0 \\ 0 && -I
        \end{pmatrix}
        \begin{pmatrix}
            I && 0 \\ X_* && I
        \end{pmatrix}
        \begin{pmatrix}
            A + BX_* && B \\ 0 && -(D + X_* B)
        \end{pmatrix} 
        \begin{pmatrix}
            I && 0 \\ -X_* && I
        \end{pmatrix} \Bigg)^{-1} \\ &=
        \begin{pmatrix}
        I && 0 \\ X_* && I
        \end{pmatrix}
        \begin{pmatrix}
            PX_* -(A + BX_* )^{-1} && P \\ -(D + X_* B)^{-1} X_* && -(D + X_* B)^{-1}
        \end{pmatrix} \succeq_{K \times K} 0
    \end{aligned}
    \end{equation*}
    where $P = (A + BX_* )^{-1} B (D + X_* B)^{-1}$. Here, the fact that $X_*$ solves $XBX + DX + XA + C = 0$ was invoked in the second equality. Note also that $A + BX_*$ and $D + X_* B$ being cross-positive and stable by assumption imply that $-(A + BX_* )^{-1} \succeq_K 0$ and $-(D + X_* B) ^{-1} \succeq_K 0$ by Lemma \ref{lem:cross_stable}, and so $P \succeq_K 0$ since $B \succeq_K 0$. Further, it is clear that if all four matrices in a $2 \times 2$ block matrix are $K$-nonnegative, then the block matrix is $K \times K$-nonnegative. As such, since also $X_* \succeq_K 0$ by assumption, the final inequality follows. But since it is assumed that $L$ is cross-positive on $K \times K$, another application of Lemma \ref{lem:cross_stable} implies that $L$ is stable. This concludes the proof.
\end{proof}

\section{Conclusions}
In this paper, the following equivalent condition is supplied for a nonsymmetric algebraic Riccati equation to admit a stabilizing cone-preserving solution: an associated coefficient matrix should be stable. This extends and completes an already published sufficient condition on the nonnegative orthant into an equivalence for general proper cones. Many additional properties, such as the minimality of the cone-preserving solution, follow from the stability of the aforementioned coefficient matrix in the well-studied nonnegative case. While this lies beyond the scope of the present paper, establishing how well these features generalize to proper cones would be interesting for future works. 

\section{Acknowledgements}
The authors are grateful to Dr. Dongjun Wu for proofreading the manuscript and providing helpful comments.

\printbibliography

\end{document}